\def\mapright#1#2#3{\smash{\mathop{\hbox to
#3{\rightarrowfill}}\limits^{#1}_{#2}}}
\def\mapleft#1#2#3{\smash{\mathop{\hbox to
#3{\leftarrowfill}}\limits^{#1}_{#2}}}
\def\mapright#1#2{\smash{\mathop{\hbox to 0.90cm{\rightarrowfill}}\limits^{#1}_{#2}}}
\def\mapleft#1#2{\smash{\mathop{\hbox to 0.90cm{\leftarrowfill}}\limits^{#1}_{#2}}}
\def\mapleftright#1#2{\smash{\mathop{\hbox to 0.80cm{\leftarrowfill \rightarrowfill}}\limits^{#1}_{#2}}}
\title{Closed, oriented, connected 3-manifolds are subtle equivalence classes of plane graphs
\footnote{AMS classification 05C85 and 05C83 (primary), 57M27 and 57M15 (secondary)}} 
\author[1]{Sóstenes L. Lins}
\author[1]{Diogo B. Henriques}
\affil[1]{Center of Informatics, UFPE}
\date{\today}
\begin{document}

\maketitle

\begin{abstract}
A {\em blink} is a plane graph with an arbitrary bipartition of its edges.
As a consequence of a recent result of Martelli, it is shown that the homeomorphisms classes
of closed oriented 3-manifolds are in 1-1 correspondence with specific classes of blinks. 
In these classes, two blinks are equivalent if they are linked by a finite sequence of 
local moves, where each one
appears in a concrete list of 64 moves: they organize in 8 types,
each being essentially the same move on 8 simply related configurations. 
The size of the list can be substantially
decreased at the cost of loosing symmetry, just by keeping a very simple move type,
the {\em ribbon moves} denoted $\pm \mu_{11}^{\pm}$ (which are in principle redundant). 
The inclusion of $\pm \mu_{11}^{\pm}$  implies that
all the moves corresponding to plane duality (the starred moves), except for $\mu_{20}^\star$
and $\mu_{02}^\star$, are redundant and the coin calculus is reduced to 36 moves on 36 coins.
 A {\em residual fraction link} or a {\em flink}, 
is a new object which generalizes {\em blackboard-framed link}. 
It plays an important role in this work. It is in the aegis of this work to find new important connections 
between 3-manifolds and plane graphs.
\end{abstract}

\section{Introduction}

A {\em blink} is a plane graph with an arbitrary edge bipartition into
two colors (black and gray). {\em Plane} means that it is given embedded in a plane.
Two blinks $B$ and $B'$ are the same if there is an isotopy of the plane onto itself so that 
the image of $B$ is $B'$. The next four blinks are all distinct even tough they have the same 
subjacent graph: 
\raisebox{-2mm}{\includegraphics[width=40mm]{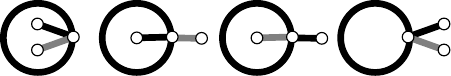}}. Under the
equivalence class generated by the coin moves of Theorem \ref{theo:theorem}
these 4 blinks become the same and each of them
induces the 3-dimensional sphere $\mathbb{S}^3$.

\subsection{Statement of the Theorem}
This paper proves the following theorem:
\begin{theorem}
\label{theo:theorem}
 The classes homeomorphisms of closed oriented connected 3-manifolds are in 1-1 correspondence
 with the equivalence classes of blinks where two blinks are equivalent if one is obtainable from
 the other by a finite sequence of the local moves where each term is 
 one of the 64 moves (not necessary distinct) below\\
 \begin{center}
 \includegraphics[width=16cm]{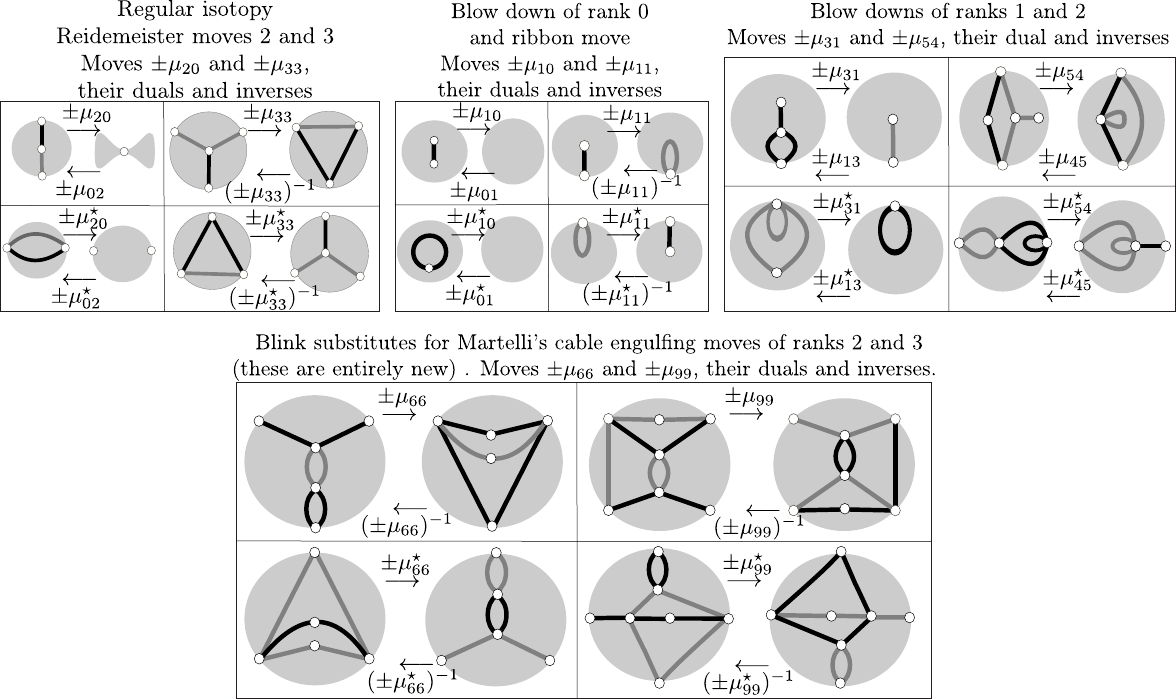} 
  \end{center}
  \end{theorem}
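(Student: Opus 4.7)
The strategy is to factor the bijection through an intermediate object, namely the \emph{flinks} (residual fraction links) announced in the abstract, and then to pull back Martelli's recent calculus to the combinatorial world of blinks. So the plan splits cleanly into three movements: a geometric step translating blinks into framed link diagrams, a topological step invoking Martelli's theorem for framed links in $\S^3$, and a combinatorial step verifying that Martelli's moves pull back exactly to the 64 local moves pictured in the statement.

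First I would define a map $\Phi$ from blinks to flinks, following the classical medial/Reidemeister construction: thicken the plane graph to a ribbon, replace each edge by a crossing whose over/under datum is dictated by its color (black versus gray), and read off the resulting diagram together with its blackboard framing. The bipartition of edges is exactly the combinatorial data needed to specify a diagram, so one obtains a well-defined flink $\Phi(B)$. The inverse construction shades one family of regions of the diagram and recovers the edge-colored plane graph, so $\Phi$ is a bijection between blinks and blackboard-framed link diagrams (up to planar isotopy), and more generally between blinks and flinks once the residual fraction data has been properly absorbed. From Martelli's classification, the homeomorphism class of the 3-manifold obtained by surgery depends only on the equivalence class of the flink under Martelli's list of local moves.

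Second, I would identify the target topological theorem precisely, stating Martelli's calculus in terms of flinks rather than framed links, and assign to each blink $B$ the closed oriented 3-manifold $M(B)$ obtained by Dehn surgery on $\Phi(B)$. The point is that $M(B)=M(B')$ if and only if $B$ and $B'$ are linked by a finite sequence of Martelli moves applied to $\Phi(B)$ and $\Phi(B')$; this is where the recent result of Martelli does all of the topological work. Surjectivity onto the set of homeomorphism classes of closed oriented connected 3-manifolds is automatic since every such manifold is a surgery on some framed link in $\S^3$.

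Third, and this is the part I expect to be the main obstacle, I would verify that the 64 \emph{coin moves} in the picture form a complete and faithful combinatorial translation of Martelli's moves through $\Phi$. Each Martelli move, being local, is supported in a small disk of the diagram, so under $\Phi^{-1}$ it becomes a local modification of the blink; by symmetry and by the 8-fold breaking into types the list organizes itself into the 8 families of 8 moves hinted at in the abstract. The delicate points will be: (i) checking that all symmetries (black/gray swap, dual plane graph, reflection) are accounted for so that nothing is missed, (ii) verifying that boundary conditions on the supporting disk are compatible with the bipartition along its boundary, and (iii) showing that the redundancies later exploited via the ribbon moves $\pm\mu_{11}^{\pm}$ really are redundancies, which means deriving the starred moves from the remaining ones. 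Once the combinatorial translation is established move by move, the theorem follows from Martelli's topological theorem by transport of structure along $\Phi$.
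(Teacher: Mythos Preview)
Your three-movement plan is exactly the paper's strategy: the bijection $\Phi$ is the medial construction of Fig.~\ref{fig:linkblink}, the topological input is Martelli's finite calculus, and the proof then reads off the 64 coins by superimposing each Martelli move on its 2-face-colored diagram. So the architecture is right.

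There is, however, one genuine obstruction you have not anticipated, and it is the crux of the paper's argument. Martelli's moves $A_3$ and $A_4$ (the 2- and 3-cable engulfing moves) have \emph{disconnected} left-hand configurations: an unknotted circle sitting next to, but not linked with, a cable of parallel strands. Such a configuration does \emph{not} translate into a local blink move under $\Phi^{-1}$, because the 2-face coloring of a disconnected diagram in a disk does not restrict to a coherent coloring compatible with an arbitrary exterior; in short, the medial dictionary breaks down exactly here. The paper's remedy is to introduce modified moves $a_3$ and $a_4$ whose left sides are connected (the circle overlaps the cable via a pair of $r_2$-crossings), prove a short lemma that $a_i \equiv A_i$ in the presence of Reidemeister~2, and only then perform the translation. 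Without this replacement step your third movement cannot be completed, and you should flag it explicitly rather than folding it into ``boundary conditions''.

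A smaller correction: your item (iii) about deriving the starred moves from the ribbon moves $\pm\mu_{11}^{\pm}$ is not part of the proof of Theorem~\ref{theo:theorem}. The 64-coin calculus already \emph{includes} the ribbon moves and all starred moves; the redundancy analysis you describe is the content of the later Corollary reducing 64 to 36, and should be kept separate.
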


There are 64 local configurations of sub-blinks, each named a {\em coin}, 
divided into 8 families of 8 simply related
coins and also divided into 32 pairs of left-right coins,
A move replaces the left (right) coin of a pair by its right (left) 
coin. Thus, the number of moves is equal to the number of coins.
Boundary and internal vertices of the coins are shown as small white disks. 
The complementary sub-blink in the exterior a coin is completely arbitrary; 
its intersection with the corresponding internal coin is a subset of the set of 
attachment vertices in the boundary of the coin.
The support of the coins
are disks, except in the the right coin  
of $\mu_{20}$, case in which is a pinched disk. 
The number $k$ of the attachment vertices  satisfies $k\in \{0,1,2,3,4\}$. 
 
 \subsection{Organization of the paper}
 In Section \ref{sec:motivationtopprelim} the motivation, the topological 
 preliminaries and an epistemological view of the work are discussed.
 The proof of the Theorem \ref{theo:theorem} 
 is given in Section \ref{sec:prooftheorem}.
 A reduced but sufficient form of this coin 
 calculus having 36 coins (and moves) is obtained in Section \ref{sec:roleribbonmoves}.
 In Section \ref{sec:conclusion}a complete census with no duplicates of
 the closed, oriented, connected, prime 3-manifold induced by blinks up to 8 edges.

\section{Motivation and topological preliminaries}
\label{sec:motivationtopprelim}
\subsection{Motivation}

In his Appendix to part 0, J. H. Conway in his famous book {\em On Numbers and Games}, 
\cite{onnumbersandgames}, says: {\em ``This appendix is in fact a cry for a Mathematician
Liberation Movement!

Among the permissible kinds of constructions we should have:\\
(i) Objects may be created from earlier objects in any reasonable constructive fashion.\\
(ii) Equality among the created objects can be any desirable equivalence relation.''
} 
 
This paper is in the confluency of two significant research fields:
the topological study of closed orientable 3-manifolds and the combinatorial study of plane 
graphs.  The result proved here provides a glimpse, in the spirit of 
Conway's quotation, to effectively
enumerate once each closed orientable 3-manifolds. 
Blinks are easy to construct from simpler blinks, and their 
isomorphism problem can be solved by a polynomial
algorithm which finds, via a few fixed conventions (lexicography), a numerical {\em code} for it. 
What can be more desirable than an equivalence relation on such simple mathematical 
objects that captures the subtle and difficult computational topological notion of
factorizing any homeomorphism between two closed, oriented, connected 3-manifolds?
 
\subsection{Topological preliminaries}

This subsection contains the basic topological material needed. 
It is primarely intended to the 
combinatorially oriented readers, unfamiliar with the fundamental definitions of knots, 
links and framed links.
Some unfamiliar notation  is also introduced and 
definitions which are new (such as the flinks) and that will be 
used throughout the paper. Moreover,a short historic overview of the known results is presented.

\subsubsection{Knots and links into $\mathbb{R}^3$ and into $\mathbb{S}^3$}

A {\em knot $K$} is an embedding of a circle,  $\mathbb{S}^1$, into $\mathbb{R}^3$
(or into $\mathbb{S}^3$, the boundary of a 4-dimensional ball).
The {\em unknot} is a knot which is the boundary of a disk.
A {\em link with $k$ components} is an embedding of a disjoint union of $k$ 
copies of $\mathbb{S}^1$,
$\left(\cup_{i=1}^k \mathbb{S}^1_i \right)$,
into $\mathbb{R}^3$ ((or $\mathbb{S}^3$) with disjoint images. 
In this way, a knot is a particular case of a link: one which
has one component. Fig. \ref{fig:R3S3} shows a 1-1 correspondence
$K \leftrightarrow K'$ between knots into $\mathbb{R}^3$ and into $\mathbb{S}^3$. 
By abuse of language, a knot is identified with its image.

\begin{figure}[H]
\begin{center}
\includegraphics[width=9.0cm]{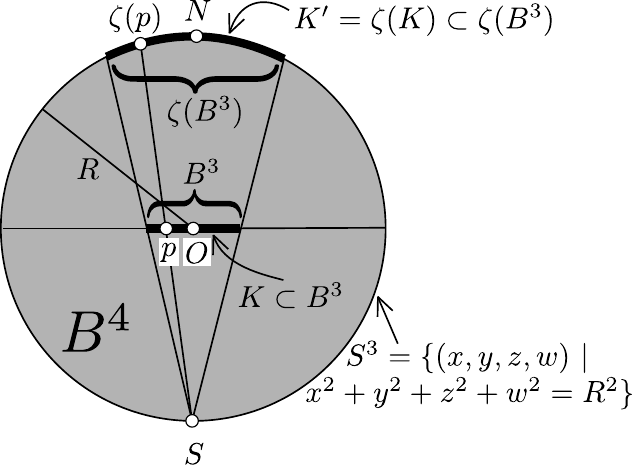} 
\caption{\sf How a knot $K$ into $\mathbb{R}^3$ 
is related to a knot $K'$ into $\mathbb{S}^3$: 
the 3-sphere $\mathbb{S}^3$
is the boundary of a 4-ball $\mathbb{B}^4$; the knot $K$ is contained 
in a ball $\mathbb{B}^ 3$ of radius $r$ centered in the origin $O$ and 
contained in the equator $\{(x,y,z,w) \in \mathbb{B}^4 \ | \ w=0\}$; by making 
$\frac{R}{r}$ big knots $K$ (which the south pole of the 4-ball)
and $K'$ are as close as being isometric as desired. $K'$ is the image of $K$ under the
stereographic projection $\zeta$ centered at the south pole $S$. In this work, knots into 
$\mathbb{R}^3$ and into $\mathbb{S}^3$ are worked with. Thus the 
easy correspondence between the two types is welcome.
}
\label{fig:R3S3}
\end{center}
\end{figure}

Links can be presented with profit by their {\em decorated 
general position projections} into the $xy$-plane $\mathbb{R}^2$,
by simply making 0 the $z$-coordinate. As in Fig. \ref{fig:R3S3}, the knot is inside 
$\mathbb{B}^3 \subset \mathbb{R}^3 $,
having made the 4th coordinate $w$ equal to 0.
Here {\em general position} means that 
in the image of the link there is no triple points and that at each neighborhood of
a double point is the transversal crossing of two segments of the link, named {\em strands}. 
{\em Decorated} means that we keep the information
of which strand is the upper one, usually by removing a piece of the lower strand. In this paper
yet another way to decorate the link projections is used: the images of the link components 
are thick black curves
and the upper strands are indicated by a thinner white segment inside the thick 
black curve at the crossing (see left side of Fig. \ref{fig:knotandframedknot}).

\subsubsection{Framed knots, ribbons, framed links and blackboard-framed links into $\mathbb{R}^3$}
A {\em framed knot} is an embedding of  
$\mathbb{S}^1 \times [-\epsilon,+\epsilon]$  
into $\mathbb{R}^3$ (or $\mathbb{S}^3$), for an arbitrarily fixed $\epsilon>0$. 
A framed knot is also called {\em a ribbon}.
The {\em base knot} of a ribbon
is the ribbon restricted to $\mathbb{S}^1 \times\{0\}$. 
A {\em framed link} is a collection of ribbons with disjoint images. The ribbons 
used are prepared by isotopies so that their projections remain with constant
width $2\epsilon$. Fig. \ref{fig:GettingImmersedBandSameWidth} 
shows how to achieve this condition.

\begin{figure}[H]
\begin{center}
\includegraphics[width=6.0cm]{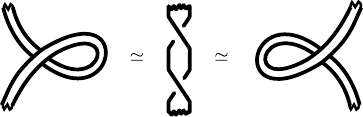} 
\caption{\sf Getting a constant width immersion of a ribbon projection into $\mathbb{R}^2$. 
Each 360-degree rotation of the ribbon in the space is ambient isotopic to
a curl in the ribbon (in two different ways) so as to maintain  
constant the width immersion of the ribbon projection.
In particular, after making these isotopies, the intersection of the images of
$\mathbb{S}^1 \times\{-\epsilon\}$
and $\mathbb{S}^1 \times\{+\epsilon\}$
have 4 distinct points (it used to have 2 points) 
near each crossing of the base link and there are no other 
crossings in the immersed ribbons. 
}
\label{fig:GettingImmersedBandSameWidth}
\end{center}
\end{figure} 

Note that a blackboard-framed link is not a framed link.
Indeed, the blackboard-framed is the base link of its framed link, see Fig. 
\ref{fig:knotandframedknot}.

\begin{figure}[H]
\begin{center}
\includegraphics[width=8.5cm]{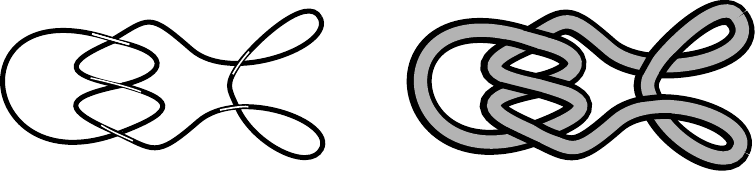} 
\caption{\sf A knot and its corresponding framed knot (its ribbon) in an adequate projection,
one which maintains the width of the ribbon.
Taking the base knot of this ribbon produces back 
the {\sf blackboard-framed link}, seeing on the left.
}
\label{fig:knotandframedknot}
\end{center}
\end{figure}

The isotopy class of a framed link is determined by assigning an 
integer to each component of the link which is equal to the {\em linking number} of the
two components of the boundary of each of its ribbon oriented in the same way. The linking number 
can be obtained from an arbitrary decorated general position projection of the oriented band:
it is equal to half of the algebraic sum of the $(\pm)$-signs of the 
crossings of distinct boundary components. The convention is that
\raisebox{-3mm}{\includegraphics[width=3cm]{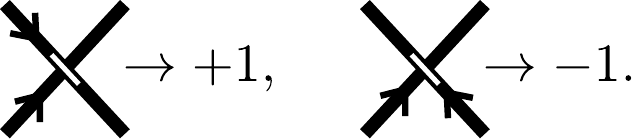}}
The linking number is an invariant, that is, it does not depend on 
the particular projection used. This is proved by K. Reidemeister in its 1932 book
on Knot Theory, \cite{reid1932}. He isolates three local moves $r_1, r_2, r_3$ that
are enough to finitely factor any arbitrary ambient isotopy between two decorated general 
position projections of the same link:
\raisebox{-3mm}{\includegraphics[width=7cm]{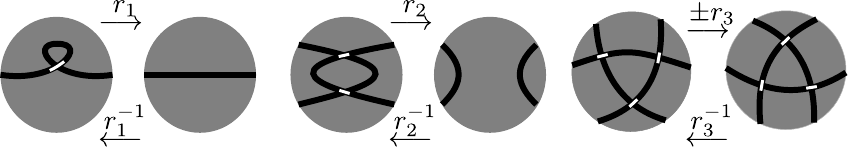}}. The {\em self-writhe 
of a  projected link component} is the algebraic sum of the signs of the self-crossings
of that component. 

As a matter of fact, Theorem \ref{theo:theorem} which 
is proved in this work is the counterpart for 3-manifolds
of Reidemeister Theorem. Note that in the coin calculus depicted together
with flinks in Fig. \ref{fig:flinkandlinktogether}
there are 8 versions of each of Reidemeister moves $r_2$ and $r_3$. Note also that 
Reidemeister move $r_1$ is not used at all. 
The equivalence class of decorated general position link projection generated
by $r_2^{\pm 1}$ and $r_3^{\pm 1}$ is called {\em regular isotopy}. It plays an important role
in the computation of the Jones invariants via Kauffman's bracket \cite{kauffman1987state}.

A {\em blackboard-framed link} is an {\em adequate projection} 
of a framed link prepared by isotopy so that 
the base link is projected as a decorated general position one and its ribbons are immersed
with constant width $2\epsilon$, see Fig. \ref{fig:GettingImmersedBandSameWidth}.  
Moreover, we adjust the number of curls and their signs 
(in the base link) so that the self-writhe of each component coincides with the linking number of 
the two boundary components of its ribbon. The advantage of the blackboard-framed link is that
we no longer have to worry about assigning numbers to the components. These integer 
numbers are induced by the plane of projection. The importance of this concept was 
advocated by L. Kauffman in a number of works, including \cite{kauffman1991knots} and 
\cite{kauffman1994tlr}. Its generalization follows.

\subsubsection{Flinks and its relation with blinks}
A new object, defined in this work is a {\em flink}. It generalizes the notion of
blackboard-framed link in an adequate way (an invariance under 
Kirby's handle sliding move) to be made clear, see Fig.\ref{fig:handleslide}. 
Flink is a dutch word meaning
{\em significantly}. It is also an acronym for residual {\bf f}raction {\bf link}.
A {\em residual fraction} is either an irreducible fraction $\frac{p}{q}$ with $0<p<q$, else
$\pm 1/0 =\pm \infty$, else $0/1=0$. 
The {\em ribbon move}
in a decorated general position link projection in $\mathbb{R}^2$ is the move defined by
the following change in a pair of coins:
\raisebox{-4mm}{\includegraphics[width=2.5cm]{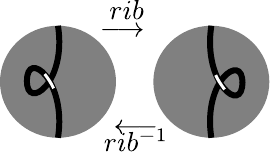}}.
A {\em flink} is the equivalence class of fraction decorated general position link
projections under regular isotopies and ribbon moves. {\em Fraction decorated} means 
decorated and having a residual fraction assigned to each component. A {\em $0$-flink}
is one so that each component has 0 as its residual fraction. Note that $0$-flink
and blackboard-framed link are the same concept.
Fig. \ref{fig:linkblink} shows that there is a 1-1 correspondence between 0-flinks and blinks.

The {\em surgery coefficients $p'$ and $q$ associated to a component of a flink} satisfy
(by definition) $\frac{p'}{q} = w + \frac{p}{q}$ where $w$ is the self-writhe of the component
and $\frac{p}{q}$ is its residual fraction.

\begin{figure}[H]
\begin{center}
\includegraphics[width=15cm]{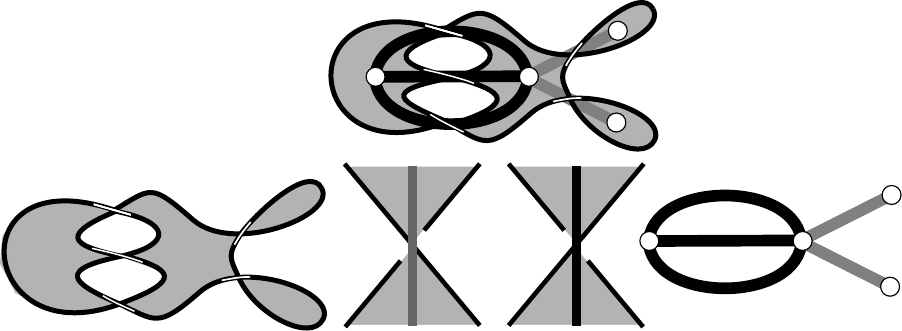} 
\caption{\sf From 0-flink to blink and back: 
the projection of any link can be 2-face colorable into white and gray with the infinite
face being white so that each subcurve between two crossing have their incident 
faces receiving distinct
colors. The above figure shows how to transform a link projection into a blink (with thicker edges
than the curves representing the link projection). 
The vertices of the blink are distinguished fixed
points represented by white disks in the interior of the gray faces. 
Each crossing of the link projection becomes an edge in the corresponding blink, 
An edge of the blink is gray if the upper strand 
that crosses it is from northwest to southeast, it is black if the the upper strand 
that crosses it is from northeast
to southwest. The inverse procedure
is clearly defined. In fact, the link is the so called {\em medial map} of the blink.
Thus we have a 1-1 correspondence between 0-flinks
and blinks. The (complete) blink at the right induces Poincaré's sphere, 
the spherical dodecahedron space. The expressibility of a blink is powerful: quite
complicated 3-manifolds are induced by simple blinks.}
\label{fig:linkblink}
\end{center}
\end{figure} 

\subsubsection{Lickorish's groundbreaking result}
In a grounding breaking work (1962),  W.B.R. Lickorish, 
\cite{lickorish1962representation}, proved that
any closed, oriented, connected $\mathbb{M}^3$ has inside it a finite number
$k$ of disjoint solid tori each in the form of a homeomorphic image 
of $\mathbb{S}^ 1  \times[-\epsilon,+\epsilon]\times[-\delta,+\delta]$,
denoted by  $(\mathbb{S}^ 1  \times[-\epsilon,+\epsilon]\times[-\delta,+\delta])_i$
so that, where $\mathbb{S}^ 3$ is the 3-dimensional sphere, 
$$\mathbb{M}^ 3\backslash \bigcup_{i=1}^k 
(\mathbb{S}^ 1  \times[-\epsilon,+\epsilon]\times[-\delta,+\delta])_i=
\mathbb{S}^ 3\backslash \bigcup_{i=1}^k (\mathbb{S}^1 
\times[-\epsilon,+\epsilon]\times[-\delta,+\delta])_i.$$
As a consequence, each closed oriented 3-manifold can be obtained from $\mathbb{S}^3$ 
by removing a subset of disjoint solid tori and pasting them back in a different way.
The most general parameters to identify the pasting is a pair of integers for each 
component of the link, named {\em surgery coefficients}. 
A filling algorithm algorithm applies component by component, is named 
{\em $(\pm p,q)$-Dehn filling}, and is explained in Fig. 
\ref{fig:surgerycoefficientsfromflink}. This Figure explains how to get
the surgery coefficients from the flink.

\begin{figure}[H]
\begin{center}
\includegraphics[width=14.0cm]{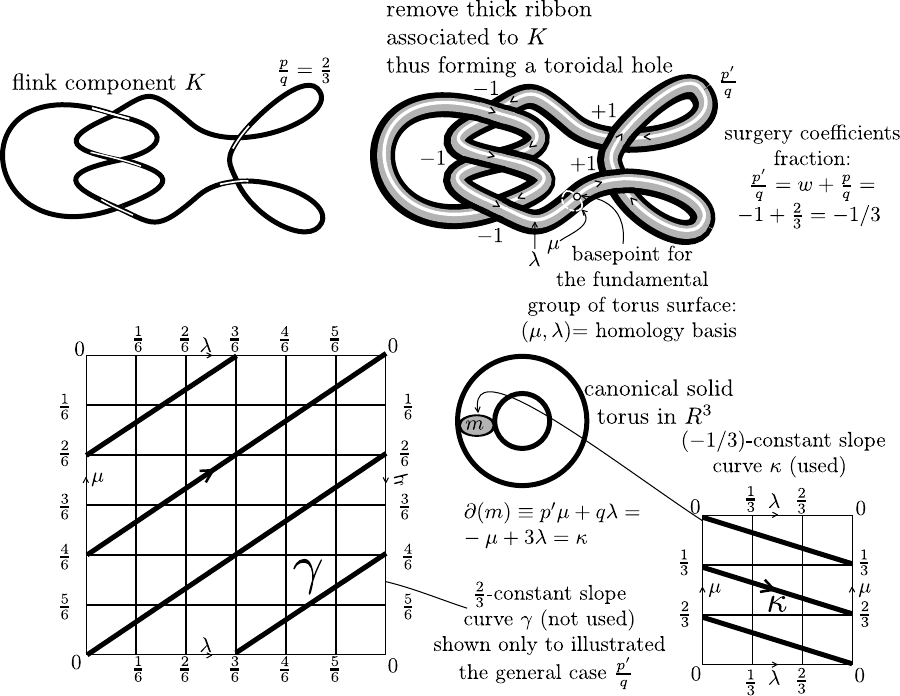} 
\caption{\sf How to obtain a closed oriented connected 
3-manifold from a flink in $\mathbb{S}^3$:
remove the thick ribbons correponding to all components of the flink.
Repeat the following $(p',q)$-Dehn filling for each component.
Let $\lambda = K \times \{0\} \times\{+\delta\}$ and let $\mu$ be a closed curve that 
is contractible in the thick ribbon but not in its boundary. The pair of closed curves
$(\mu, \lambda)$ form a basis for the fundamental group of the boundary of the thick ribbon.
Orient $\mu$ arbitrarily and $\lambda$ so that $K$ equally oriented has linking number 1 with $\mu$.
Note that $\lambda$ remains parallel to $K$ and never touches the boundary of the ribbon.
Let $\kappa$ be the constant slope closed curve homotopic to $p\mu + q\lambda$. Consider 
a canonical solid torus embedded into $\mathbb{R}^3$ and let $m$ be its meridian.
Consider a homeomorphism that identify curves $\kappa$ and the boundary of $m$, $\partial(m)$.
This homeomorphism is univocally extensible to identify the boundaries of the thick
ribbon and of the canonical solid torus, thus closing the toroidal hole: indeed, after
identifying $m \times [-\zeta,+\zeta]$ (for a small $\zeta >0$) in the solid torus and in the toroidal 
hole in the 3-manifold what remains to identify are two 3-balls. This has a unique 
solution up to isotopy.
}
\label{fig:surgerycoefficientsfromflink}
\end{center}
\end{figure} 

For proving the theorem, and this is important in the present work, the fillings used 
by Lickorish satisfy $q=1$. He call these surgeries {\em honest}.
Actually, Lickorish's result had been proved 2 years before 
by A. H. Wallace \cite{wallace1960modifications} by using differential geometry. However
it was the purely topological flavor of Lickorish's proof that spurred
the subsequent developments. Also, Lickorish does not state his theorem in this way.
The form used for the solid tori is convenient because of its simple relation with flinks.
It is inspired in the lucid account by J. Stillwell of the Lickorish's theorem 
given in \cite{stillwell1993classical}. Each each solid torus to be removed
from $\mathbb{S}^ 3$ is a thich ribbon 
$K \times[-\epsilon,+\epsilon] \times[-\delta,+\delta]$, where 
$K$ is (the image of) a component of the flink, with fixed small positive
constants $\epsilon>\delta>0$. Each section $k \in K$, 
$\{k\} \times[-\epsilon,+\epsilon] \times[-\delta,+\delta]$, is a rectangle whose
$\delta$-sides are parallel to the $z$-axis. Thus the projections of the thick 
ribbon and the of the ribbon coincide.

\subsubsection{Kirby's famous calculus of framed links}
In 1978 R. Kirby published his, to become famous, calculus of framed links, \cite{kirby1978calculus}.
The gist of this paper is that two types of moves 
are enough to go from any framed
link inducing a closed oriented 3-manifold to any other such link inducing the same manifold.
One of the moves is absolutelly local: creating or cancellating an arbitrary new unknotted 
component with frame $\pm 1$ separated from the rest of the link by an $\mathbb{S}²$.
The other type of move, {\em the band move}, \cite{kirby1978calculus} \cite{kauffman1991knots}) 
{\em (or handle sliding)} is non-local and infinite in number.

\begin{figure}[H]
\begin{center}
\includegraphics[width=14cm]{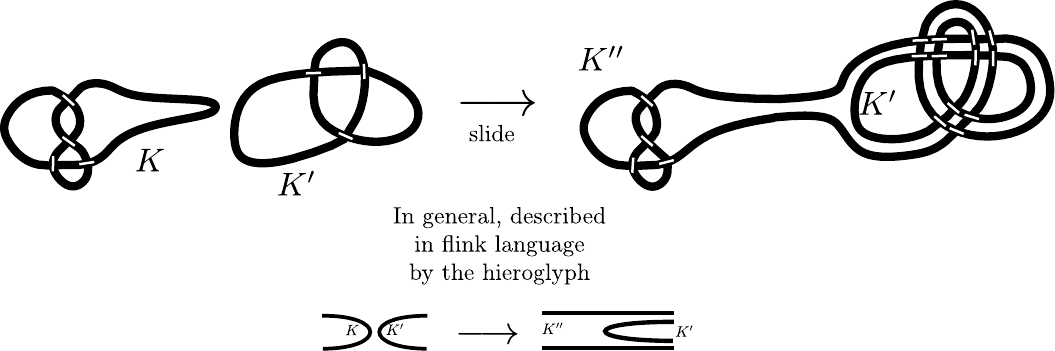} 
\caption{\sf Kirby's {\em handle slide move}, also known as the {\em band move}. 
In this work the handle slide move is applied only to flinks, so there is no issue
about surgery coefficients.
Given $K$ and $K'$ two distinct components of a
flink start by making a close parallel copy of $K'$ in such a way that 
it forms an immersed band with its originator $K$.
Let $K''$ be the connected sum of $K$ and the copy of $K'$. 
The connected sum is defined by a em new band which is a
thin rectangle arbitrarily embedded into $\mathbb{S}^3$, so as to miss the link. 
The short sides of this band 
are attached to $K$ and to the copy of $K'$ and then, recoupled in the other way.
The new band can be quite complicated because it may wander
arbitrarily (as long as it misses the link) 
in $\mathbb{R}^3$ in its way to connecting
the two components. More details in Kauffman's book, \cite{kauffman1991knots}. 
The property of flinks that made me introduce the concept is that the
{\bf residual fractions of the flink are invariant under the Kirby's band move.}
This move then can be depicted in all its generality, 
via the hieroglyph shown in the bottom part of the 
Figure. See Section 12.3 of \cite{kauffman1994tlr}. However,
this hieroglyphic move is non-local since the exterior of the hieroglyph changes, and 
beacause there are infinite exteriors, there are infinite Kirby's band moves. Fenn-Rourke
reformulation, treated next, provide an infinite sequence of trully local moves.
}
\label{fig:handleslide}
\end{center}
\end{figure} 

\subsubsection{Fenn-Rourke reformulation of Kirby's calculus}

\begin{figure}[H]
\begin{center}
\includegraphics[width=16.5cm]{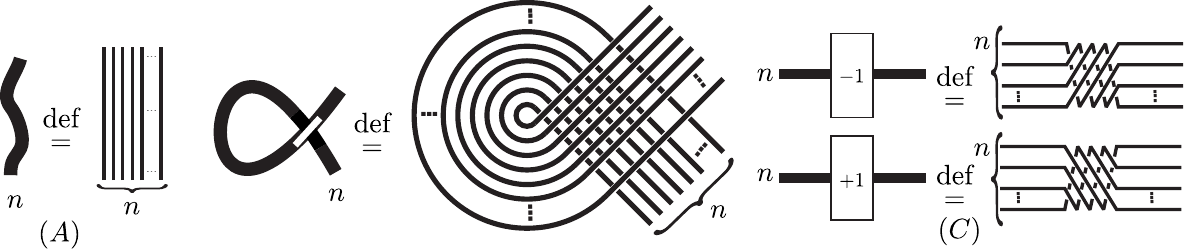} 
\caption{\sf Notation for special disk neighborhoods of general position decorated link projections}
\label{fig:somenotation}
\end{center}
\end{figure} 

In 1979 R. Fenn and C. Rourke (\cite{fenn1979kirby}) show that Kirby's moves could 
be replaced by an infinite sequence of a single type of move (a {\em blow down move})
indexed by $n$, which is depicted at the left side of Figure \ref{fig:FennRourkeAndKauffmanMove}. 
In a blow down move the number of components decreases by 1.
This has been a very useful reformulation with many applications, 
including Martelli's calculus (soon to be treated)
which uses it instead of the direct moves of Kirby. 

\begin{figure}[H]
\begin{center}
\includegraphics[width=11.5cm]{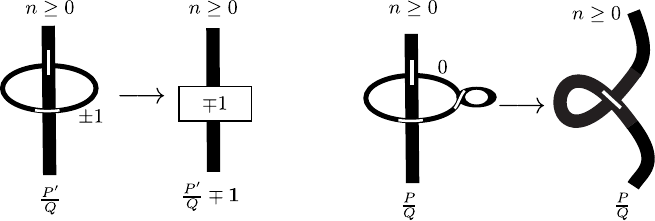} 
\caption{\sf Fenn-Rourke infinite sequence of blown-down moves on fraction decorated
links into $\mathbb{R}^3$ and 
their counterpart in terms of flinks, which generalizes 
Kauffman's blackboard-framed 
links. By definition,  $\frac{P'}{Q} = (\frac{p'_1}{q_1}, \ldots \frac{p'_n}{q_n})$
and $\frac{P}{Q} = (\frac{p_1}{q_1}, \ldots \frac{p_n}{q_n})$, where $\frac{p_i'}{q_i}$
are generalized fractions associated with the $i$-th component of the link
in $\mathbb{R}^3$ and 
$\frac{p_i}{q_i}$ is the associated residual fraction 
of the link projection into $\mathbb{R}^2$. 
The blown-downs constitute an
infinite sequence of local moves. The special case $n=0$ of these moves is also
defined and replace an isolated $\pm 1$-framed component of the unknot with one crossing by nothing.
Martelli replaced the infinite sequence by the first three and two new simple
moves $A_3$ and $A_4$, described in Fig. \ref{fig:Martelli}.}
\label{fig:FennRourkeAndKauffmanMove}
\end{center}
\end{figure} 

\subsubsection{Kauffman's idea to let the plane induce the integer framing}
In the beginning of the 1990's L. Kauffman
presented (\cite{kauffman1991knots}) a completely planar diagramatic way to deal with the 
calculus of Kirby and its reformulation by Fenn and Rourke.
The basic idea comes from the fact that every 3-manifold is
induced by surgery on a framed link which has {\em only finite integer framings}. 
This characterize the {\em handle surgeries}. According to 
Rolfsen Lickorish call each of these a
{\em honest surgery}, page 262 of \cite{rolfsen2003knots}.
The proof that we can get any manifold by surgery on integer framed links 
uses, as a lemma,  the fact that it is possible to
modify the framed link maintaining the induced 3-manifold so that every component becomes unknotted. 
A proof of this lemma appears \ref{fig:switchingcrossing}. If a component is unknotted then 
it is simple to modify the link so that each component gets an integer framing, without 
distub the integrality of the framing of other components. See Theorem 
\ref{theo:twisthomeo}. So, without loss of
generality we may suppose that all the components have finite integers as framings. Kauffman's proceeds
by adjusting each component by attaching to it a judicious number of curls so that the required framing of
a component coincides with the algebraic sum of its self-crossings. 
By specifying that the link is {\em blackboard-framed}, we no longer need the integers to
specifies the framing. They are a consequence.
In this work only blackboard-framed projections or their generalization, flink, are used.
Flink is a convenient generalization of blackboard-framed link because
their residual fractions remain constant in Kirby's calculus, in Fenn-Rourke reformulation,
and in Martelli's calculus, treated next.

\subsubsection{Martelli's finite calculus on fractionary framed link}
In an important recent paper B. Martelli \cite{martelli2012finite} 
presented a local and finite reformulation of
the Fenn-Rourke version (\cite{fenn1979kirby}) of Kirby's calculus \cite{kirby1978calculus}. 
This calculus is presented in Fig. \ref{fig:Martelli}. It remains
to be seen the consequences of Martelli' s result for obtaining new 3-manifold invariants.
A possible door for obtaining such invariants are generalizations of the combinatorial 
approach to get WRT-invariants, justified in \cite{kauffman1994tlr} and extensely
used in \cite{lins1995gca} and in \cite{lins2007blink}. To find such a generalization 
one has to take advantage of the specific sufficient local Martelli's calculus now available 
(or the coin calculus on blinks). The WRT-invariants are obtained by  
hiding in the Temperley-Lieb algebra the infinite cases of Kirby's band move, 
as pioneered by Lickorish in \cite{lickorish1991three}. 
See also page 144 of the join monography of L. Kauffman and myself, \cite{kauffman1994tlr}.
Finding such generalization of the WRT-invariants still seems to be a formidable task.
However, Martelli's theorem and the coin calculus on blinks makes it conceivable.

\begin{figure}[H]
\begin{center}
\includegraphics[width=16.5cm]{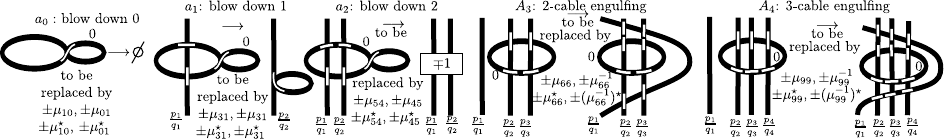}
\caption{\sf 
Martelli's calculus on flink language:
Note that for the internal
components of all the above moves the residual fraction are 0.
Martelli' proves that by keeping only
the blown-down of ranks 0, 1 and 2 and replacing all the remaining infinite sequence by two
new moves,  called 2- and 3-cable engulfing (denoted by $A_3$ and $A_4$), a sufficient 
calculus for factorizing 
homeomorphisms between closed oriented and connected 3-manifolds is achieved solely 
in terms of the above 5 local moves, their inverses, the regular isotopies moves and the ribbon moves. 
Moves $A_3$ and $A_4$ do not translate into blink moves
because their left sides are disconnected. What makes this work possible is the replacement of
these non-connected configurations by equivalent moves $a_3$ and $a_4$ 
so that blink translations become available. 
The equivalences $a_3\equiv A_3$ and $a_4\equiv A_4$ are proved in Fig. \ref{fig:proofequivalencea3A3a4A4} 
where the diagrams for the moves
appear right angle rotated relative to this figure. Observe that, in the flink language,
all the residual fractions of Martelli's calculus remain invariant.
}
\label{fig:Martelli}
\end{center}
\end{figure}

\subsubsection{Unknotting a component}
Any knot given by an decorated general position projection can be unknotted by a subset of 
crossing switches: starting in a non-crossing and going along the knot
make sure that the first passage through a crossing becomes an upper strand
(by switching the crossing if necessary). The result is clearly a projection of an unknot.
Let $F$ be a flink and $F_i$ be one of its component which is knotted. It is possible 
to modify $F$ at the cost of introducing new unknotted components so that $F_i$ becomes
unknotted and the induced 3-manifold does not change. This is a consequence of the 
above algorithm to unknot any knot and of Kirby's calculus on flinks, as shown in Fig.
\ref{fig:switchingcrossing}, which is adapted to fit the flink language, from \cite{kauffman1994tlr}.
\begin{figure}[H]
\begin{center}
\includegraphics[width=16.5cm]{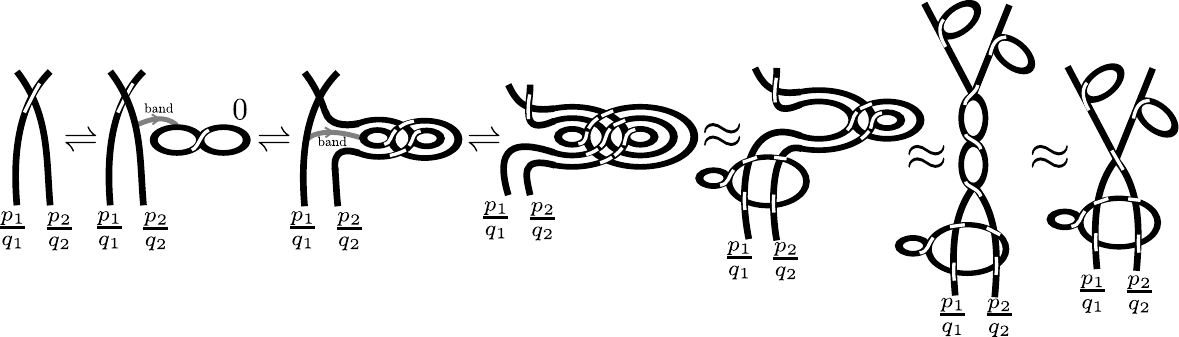}
\caption{\sf Switching a crossing at the cost of adding an unknoted component and
some curls: this is achieved by applying Kirby's move of type 1 
and twice Kirby's moves of type 2 as well
as some isotopies. Note that the residual fractions stay invariant.
}
\label{fig:switchingcrossing}
\end{center}
\end{figure} 

\subsubsection{Obtaining a 0-flink inducing the same manifold as any input flink}
Let $F = F_1 \cup F_2 \cup \ldots \cup F_k$ be a flink and let $r'_j$ be the 
fraction of the surgery coefficients given, as already defined, by the sum of 
the residual fraction $r_j$ and the self-writhe of the $j$-th component $F_j$. 
Let $F_i$ be an unknotted component.
\begin{lemma}
The two following operations on a flink $F$ maintain the induced 3-manifold:
\begin{itemize}
\item Create or cancell a component with residual fraction $\pm \infty$.
\item Effect a $t$-full-twist about $F_i$ changing
judiciously the link and the surgery coefficient fractions $r'_k$'s.
\subitem The surgery coefficient fractions change as follows:
\subsubitem Component $F_i$ of the twist: $r_i''=1/(t+(1/r'_i))$
\subsubitem Other components $F_j$: $r_j''=r'_j + t(\ell kn(F_i,F_j))^2$. 
\subitem The link changes as follows:
\subsubitem Effect $t$ positive or negative full twists, according 
to the $\mp$-sign of $t$, in the cable of parallel lines
encircled by $F_i$.
\end{itemize}
\end{lemma}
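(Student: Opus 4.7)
Both items are flink-language restatements of classical Kirby--Rolfsen facts about surgery on $\mathbb{S}^3$; my plan is to derive each from the Dehn filling recipe of Fig.~\ref{fig:surgerycoefficientsfromflink} together with the ambient self-homeomorphism of $\mathbb{S}^3$ that implements the move.

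For the first item, observe that residual fraction $\pm 1/0$ combined with any self-writhe $w$ yields surgery coefficient $p'/q$ with $q=0$, so $p'/q=\pm\infty$. In the Dehn filling recipe the filling slope $\kappa=p\mu+q\lambda$ then reduces to $\kappa=\pm\mu$, the meridian of the removed thick ribbon. Since this meridian already bounds a meridian disk in the original thick ribbon, identifying it with the meridian of the canonical solid torus produces a gluing that extends, uniquely up to isotopy, to an identification of the bounding disks and hence of the two solid tori. The filled manifold therefore coincides with the one in which the ribbon was never cut out; the 3-manifold is unchanged, and a $\pm\infty$-framed component may be freely added or deleted.

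For the second item, since $F_i$ is unknotted it bounds an embedded disk $D\subset\mathbb{S}^3$. Let $N(D)\cong D\times[-1,1]$ be a small tubular neighborhood and let $\phi_t\colon\mathbb{S}^3\to\mathbb{S}^3$ be the self-homeomorphism supported in $N(D)$ that performs $t$ full rotations about the normal axis of $D$, namely the $t$-fold Rolfsen twist. Since $\phi_t$ is the identity on $\partial N(D)$, hence outside $N(D)$, and fixes $F_i=\partial D$ pointwise, applying $\phi_t$ to the whole flink produces a new link in $\mathbb{S}^3$ whose surgery presentation, with filling slopes transported through $\phi_t$, yields the same 3-manifold. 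The two advertised formulas then come from tracking how $\phi_t$ acts on the meridian-longitude bases of the tubular neighborhoods. On $\partial N(F_i)$, the disk $D$ meets the torus along the preferred longitude $\lambda_i$, so $\phi_t$ restricts there to the $t$-th power of the Dehn twist about $\lambda_i$, fixing $\lambda_i$ and sending $\mu_i\mapsto \mu_i+t\lambda_i$; the old filling curve $p'_i\mu_i+q_i\lambda_i$ is therefore replaced by $p'_i\mu_i+(q_i+tp'_i)\lambda_i$, giving $r''_i=p'_i/(q_i+tp'_i)=1/(t+1/r'_i)$. For a distinct component $F_j$, the strands of $F_j$ passing through $D$ form a bundle of algebraic size $\ell kn(F_i,F_j)$, and $\phi_t$ twists this bundle $t$ times as a whole; a standard computation counting the extra self-crossings the twist introduces on a blackboard-framing copy of $F_j$ shows that the framing shifts by $t(\ell kn(F_i,F_j))^2$, yielding $r''_j=r'_j+t(\ell kn(F_i,F_j))^2$. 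Since residual fractions are defined only modulo integer translations, the integer parts being absorbed into the self-writhe carried by the projection, the residual fractions themselves are preserved by both operations.

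The main obstacle is orientation bookkeeping: one must pin down the orientations of $\mu_i,\lambda_i$, the handedness of $\phi_t$, and the sign of the linking number so that the formulas come out with $+t$ and $+(\ell kn)^2$ rather than $-t$ and $-(\ell kn)^2$. Once these conventions are fixed, the argument reduces to the $\infty$-surgery triviality of the first item plus the explicit Dehn-twist action on the boundary torus computed above.
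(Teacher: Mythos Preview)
Your argument is correct and is essentially the standard Rolfsen proof. The paper itself does not give an independent proof of this lemma: its entire proof reads ``The proof of this result is given in \cite{rolfsen2003knots}.'' What you have written is exactly the content one finds in Rolfsen's treatment --- triviality of $\infty$-surgery via meridional filling, and the Rolfsen twist as an ambient self-homeomorphism supported near the spanning disk of the unknotted $F_i$, together with its effect on the $(\mu,\lambda)$-basis of each boundary torus. So your proposal is not a different route; it is a spelled-out version of the citation the paper defers to, and your closing remark about orientation bookkeeping correctly identifies the only place where care is needed.
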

\begin{proof}
The proof of this result is given in \cite{rolfsen2003knots}.
\end{proof}

\begin{theorem}
\label{theo:twisthomeo}
Given any flink, there exsits a 0-flink inducing the same 3-manifold
obtainable by a polynomial algorithm in the product of its 
residual fraction denominators.
\end{theorem}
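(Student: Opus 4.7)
The plan is to combine the two operations of the preceding Lemma with the crossing-switching procedure of Fig.~\ref{fig:switchingcrossing} to iteratively drive every residual fraction to $0$, while keeping careful count of the auxiliary components introduced, so that the total work remains polynomial in $\prod_i q_i$.

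First I would dispose of every component whose residual fraction is $\pm\infty$ via the first operation of the Lemma, since such a component corresponds to a trivial Dehn filling and does not affect the induced 3-manifold. Next I would apply the crossing-switch procedure of Fig.~\ref{fig:switchingcrossing} to unknot every remaining component; each switched crossing costs one auxiliary unknotted component with surgery coefficient $\pm 1$, which is already a blackboard-framed $\pm 1$-curl and hence has residual fraction $0$. After this preprocessing phase every component is unknotted, and only the original components may still carry a nonzero residual fraction $p_i/q_i$.

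For each originally non-trivial component $F_i$ I would then run a Rolfsen-style reduction governed by the continued fraction expansion of the surgery coefficient $p'_i/q_i = w_i + p_i/q_i$. The second operation of the Lemma lets me perform a $t$-full-twist about the unknotted $F_i$, sending its surgery coefficient to $1/(t + 1/r'_i)$ and each other surgery coefficient to $r'_j + t(\ell k(F_i,F_j))^2$. Alternating twists about the component being processed with twists about auxiliary meridians introduced via the first operation realizes the Euclidean algorithm on $(p'_i, q_i)$: after $O(\log q_i)$ such steps the surgery coefficient of $F_i$ becomes an integer, and by Kauffman's curl-counting this integer framing can be absorbed into the self-writhe, leaving the residual fraction equal to $0$. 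The integer shifts $t(\ell k(F_i,F_j))^2$ accumulated by the other components are likewise absorbed into their self-writhes by adjusting the number of curls, so their residual fractions are untouched.

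Summing over components, the unknotting phase adds at most $O(c)$ components (with $c$ the number of crossings of the input), and the twisting phase adds $\sum_i O(\log q_i) = O(\log \prod_i q_i)$ further components; all bookkeeping (updating self-writhes, recording linking numbers, inserting curls) is polynomial in the number of components and crossings, so the total running time is polynomial in $\prod_i q_i$. The hard part will be controlling the cross-effects of the twisting step, since every twist about $F_i$ perturbs the surgery coefficients of all components linked to $F_i$; one must verify that absorbing these perturbations as blackboard curls preserves unknottedness and does not undo the zeroing already achieved on previously processed components. Processing the components in a fixed order and converting each integer shift into curls before moving on to the next component decouples the reductions and should make the algorithm correct and polynomial.
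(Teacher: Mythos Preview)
Your proposal follows the same three-stage outline as the paper: unknot each component via the crossing switches of Fig.~\ref{fig:switchingcrossing}, drive the surgery coefficients to integers using the twist operation of the Lemma, then absorb the resulting integers into curls. The paper's twisting phase is more spartan than yours: it never introduces auxiliary meridians, but simply iterates the Rolfsen twist about the single component $F_i$, observing that one twist sends the surgery coefficient $p'/q$ to $p'/q'$ with $0<q'<|p'|$ and bounding the total number of iterations by $q$. Your alternating-meridian scheme is aimed at the sharper $O(\log q_i)$ count coming from the continued-fraction expansion, which is pleasant but not needed for the stated polynomial bound in $\prod_i q_i$. One point you should tighten: a $t$-twist about an auxiliary meridian $m$ with parameter $s$ leaves $m$ itself with surgery coefficient $1/s$, which is not an integer unless $s=\pm 1$; you must either restrict to $s=\pm 1$ (forfeiting the logarithmic bound) or explain how these leftover non-integer coefficients on the meridians are subsequently cleared. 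The paper's single-component iteration sidesteps this bookkeeping entirely.
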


\begin{proof}
Let $F_i$ be an knotted component of a flink $F$ which have a non-null 
residual fraction  $\frac{p}{q}$. Switch crossings as in 
ref{fig:switchingcrossing} so that $F_i$ becomes unknotted,
with the same residual fraction $\frac{p}{q}$ and respective surgery coefficient fraction
$\frac{p'}{q}$. If $p'=\pm 1$ then use Lemma \ref{theo:twisthomeo} with $t=\mp q$. The 
surgery coefficients fraction of $F_i$ becomes $r_i''=1/(t+(1/r'_i))=1/(\mp q+\pm q)=1/0=\infty$. 
And component $F_i$ can be removed. Let $|p'|>1$. We have 
$r_i''=1/(t+(1/r'_i))=1/(t+(q/p'))$. Define $t$ to be the integer so that $0<t+q/p'=q'/p'<1$.
The new surgery coefficient fraction of $F_i$ is $r_i''=p'/q'$, with $q' < q$.
Repeating the procedure a number of times bounded by $q$ we arrive at $q'=1$ or $q'=0$.
Observe that the surgery coefficient fractions of the new components are integers and 
the one of the components $F_j$ linked with $F_i$ change by an integer. 
Thus we get to a flink whose all surgery coefficients fractions have $q=1$.
Therefore, by curl adjusting, a 0-flink inducing the same 3-manifold 
is obtained.
\end{proof}

\section{Proof of the Theorem}
\label{sec:prooftheorem}

\begin{figure}[H]
\begin{center}
\includegraphics[width=14cm]{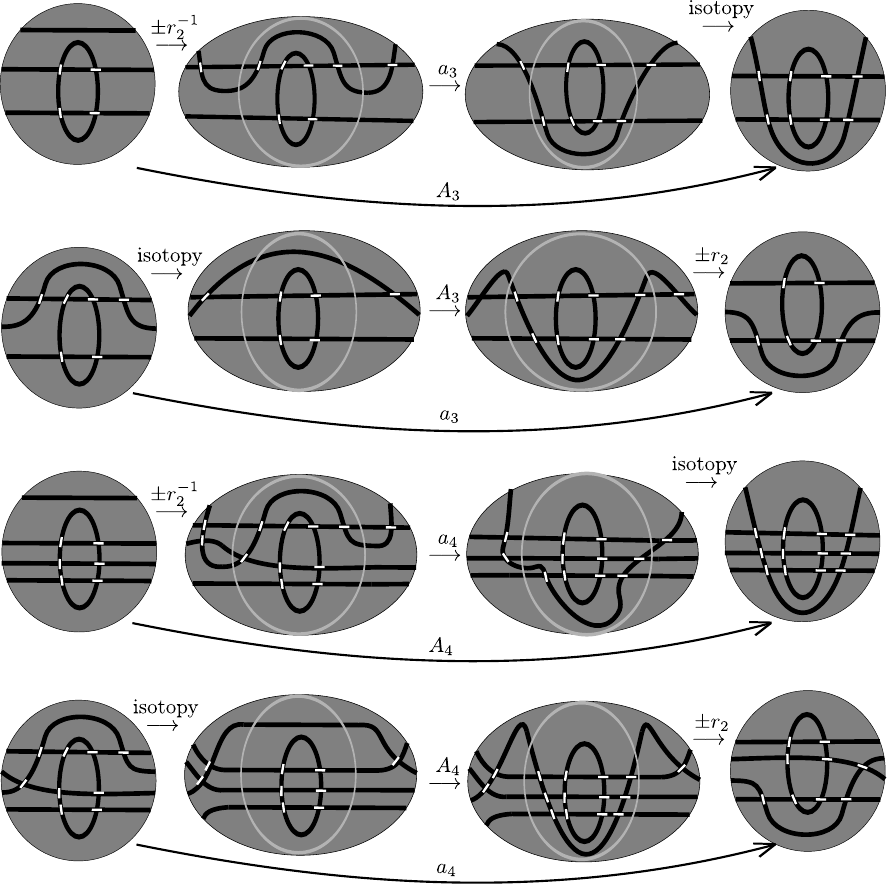} 
\caption{\sf A proof that in the presence of 
$\pm r_{2}^{\pm 1}$, the equivalences   
$a_3 \equiv A_3$ and $a_4 \equiv A_4$ hold}
\label{fig:proofequivalencea3A3a4A4}
\end{center}
\end{figure} 

\begin{lemma}
 In the presence of Reidemeister moves 2, generaly denoted by $\pm r_2^{\pm -1}$,
 moves $\pm a_3$ and $\pm A_3$ are equivalent and so are
 moves $\pm a_4$ and $\pm A_4$.
\end{lemma}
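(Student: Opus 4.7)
The plan is to verify the two equivalences $\pm a_3 \equiv \pm A_3$ and $\pm a_4 \equiv \pm A_4$ separately, each by exhibiting an explicit finite sequence of $\pm r_2^{\pm 1}$ moves that carries the left (equivalently, the right) diagram of one reformulation onto the corresponding diagram of the other. Since $a_i$ differs from $A_i$ only in whether strands of the engulfing cable are superposed on a common disc (the connected version $a_i$) or lie in separated discs (Martelli's disconnected version $A_i$), the transition is exactly what $r_2^{\pm 1}$ is designed to do: insert or delete a canceling pair of crossings between two parallel strands.

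First I would treat $a_3 \equiv A_3$. Starting from the left side of $A_3$, whose two pieces are disjoint (the engulfed component and a 2-cable of the engulfer), I would apply an $r_2$ move to slide one strand of the cable across the engulfed strand, turning the separated picture into the connected left side of $a_3$. Running the mirror sequence on the right sides, which differ only by the same superposition, shows that performing $A_3$ has the same effect on any flink as performing $a_3$ up to regular isotopy. Reversing the sequence yields the converse direction. The argument for $a_4 \equiv A_4$ has identical structure but involves sliding two strands of a 3-cable rather than one strand of a 2-cable, so the Reidemeister-2 word is roughly twice as long. In both cases the $\pm$ variants are handled uniformly, because each sign choice for $a_i$ or $A_i$ pairs with the corresponding sign choice of $r_2$.

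The main obstacle is book-keeping rather than topology. I must check at each intermediate step that the residual fractions on every component are preserved, which is automatic since $r_2$ does not change the self-writhe of any component and does not alter the partition of strands into components. I also need to verify that the crossings introduced by the $r_2$ applications fall exactly where the $a_i$ coin diagram predicts, rather than in a regularly isotopic but visually different position; this forces a specific choice of which strand goes over at each $r_2$ insertion. Once those choices are aligned with the orientations dictated by the signs of $a_i$ and $A_i$, the rest of the proof reduces to a picture-by-picture inspection of the sequence depicted in Figure \ref{fig:proofequivalencea3A3a4A4}.
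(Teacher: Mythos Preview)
Your proposal is correct and follows essentially the same approach as the paper: both arguments reduce the equivalence to a short sequence of $r_2^{\pm 1}$ moves that connect (or disconnect) the cable strands, converting the left coin of $A_i$ into that of $a_i$ and likewise on the right, with the four implications $a_3\Rightarrow A_3$, $A_3\Rightarrow a_3$, $a_4\Rightarrow A_4$, $A_4\Rightarrow a_4$ handled separately exactly as in Fig.~\ref{fig:proofequivalencea3A3a4A4}. Your additional remarks about residual fractions and self-writhe invariance under $r_2$ are correct and make explicit what the paper leaves to the picture.
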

\begin{proof}
 We refer to  Fig. \ref{fig:proofequivalencea3A3a4A4}. Its first line proves that $\pm a_3 \Rightarrow \pm A_3$.
 The second line proves that $\pm A_3 \Rightarrow \pm a_3$. The third line proves 
 that $\pm a_4 \Rightarrow \pm A_a$. The last line proves that $\pm A_4 \Rightarrow \pm a_4$. 
\end{proof}

\begin{proof} ({\bf of Theorem \ref{theo:theorem}})\\
In Fig. \ref{fig:flinkandlinktogether} we draw all the moves 
for the revised Martelli's moves on pairs of distinctly 2-colored 0-flinks 
and the respective blinks superimposed. The result follows by removing the 
0-flink moves leaving only the blink moves which are redrawn up to isotopy,
in the lower part of the figure.
\end{proof}

\begin{figure}[H]
\begin{center}
\includegraphics[scale=0.7]{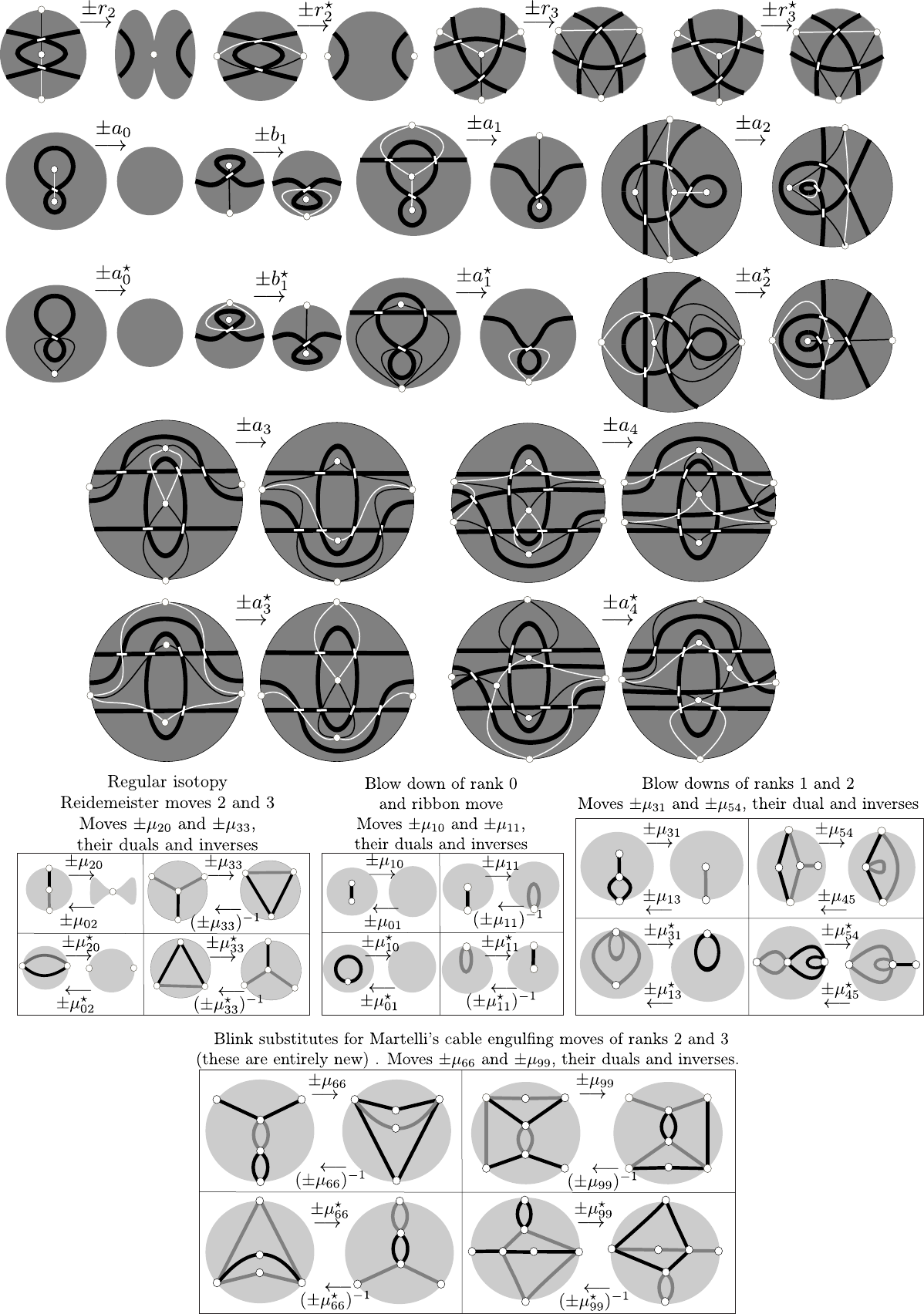}
\caption{\sf 0-Flink and blink  versions of Martelli's revised calculus with 
$a_3$, $a_4$ replacing $A_3$, $A_4$:
In the upper part of the figure, distinctly 2-face colored 0-flinks 
and respective blinks are superimposed. This implies the moves for 
the coin calculus on blinks in the lower part of the figure, 
concluding the proof of the Theorem 1.1.}
\label{fig:flinkandlinktogether}
\end{center}
\end{figure}

\section{The role of the ribbon moves  $\pm \mu_{11}^{\pm 1}$}
\label{sec:roleribbonmoves}
The counterpart of the ribbon moves in the coin calculus, 
(also called ribbon moves) $\pm\mu_{11}^{\pm 1}$ are redundant 
because Martelli's calculus is in $\mathbb{R}^3$. We include them 
in the coin calculus because with their inclusions all the dual moves,
except $\mu_{20}^\star$ and $\mu_{02}^\star$, become redundant.
\begin{lemma}
Let $f$ be the external infinite face of a decorated general position 
link projection and $g$ be a face adjacent to $f$. Then it is possible 
to interchange $f$ and $g$ by means of regular isotopies and
one ribbon move.
\end{lemma}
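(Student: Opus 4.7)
The plan is to use the standard identification of a decorated general position link projection in $\mathbb{R}^2$ with one on the 2-sphere $\mathbb{S}^2 = \mathbb{R}^2 \cup \{\infty\}$, in which the external face $f$ is distinguished only by containing the point at infinity. Interchanging $f$ and $g$ then corresponds to moving the chosen point at infinity across an arc $\alpha \subset \partial f \cap \partial g$ shared by the two faces.

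First I would pick such an arc $\alpha$; if several are available, any one of them works. On $\mathbb{S}^2$ there is an obvious ambient isotopy carrying the original infinity point into the interior of $g$ along a short path that meets the link transversely in a single interior point of $\alpha$. Re-drawing the link in $\mathbb{R}^2$ using the new choice of infinity, strands that were previously bounded stretch outward toward the new point at infinity, while strands that surrounded $g$ are pulled inward. Away from a small neighborhood of $\alpha$ this rearrangement is a planar ambient isotopy of the diagram that respects all over/under information, and it is therefore realizable by a finite sequence of the regular isotopy moves $r_2^{\pm 1}$ and $r_3^{\pm 1}$.

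The only essentially non-planar content is concentrated in a disk neighborhood of $\alpha$: the strand whose image is $\alpha$ is now viewed ``from the other side'' of the sphere, so at the coins incident to the endpoints of $\alpha$ the local over/under data appears reversed. This is precisely the local change realized by the ribbon move defined earlier in the paper, applied once at the pair of coins adjacent to $\alpha$. Composing these ingredients gives the required factorization of the form (regular isotopies) $\circ$ (one ribbon move) $\circ$ (regular isotopies).

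The main technical obstacle is the bookkeeping that verifies a single ribbon move always suffices and that handles the degenerate configurations, for example when $f$ and $g$ share more than one edge, when $\alpha$ is a loop at a single vertex of the projection, or when an endpoint of $\alpha$ is adjacent to the chosen infinity point on the sphere. In each of these cases a short preparatory sequence of $r_2^{\pm 1}$ and $r_3^{\pm 1}$ reduces the situation to the generic one in which the argument above applies verbatim, so no additional ribbon moves are ever incurred.
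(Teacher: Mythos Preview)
Your overall strategy---reinterpreting the diagram on $\mathbb{S}^2$ and sliding the point at infinity across an arc $\alpha$ of $\partial f\cap\partial g$---is the right picture, and it does explain morally why a single non-regular-isotopy move must appear: regular isotopy preserves both the writhe and the Whitney rotation number of each component, the sphere trick preserves writhe, but it changes the rotation number of the component through $\alpha$ by $\pm 2$. Exactly one ribbon move is what repairs that discrepancy.

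However, the concrete mechanism you describe is wrong. Moving the point at infinity across $\alpha$ does \emph{not} reverse any over/under data: the crossing information is intrinsic to the link in $\mathbb{S}^3$, not to the chart, so the two crossings at the endpoints of $\alpha$ keep their signs. Consequently your sentence ``at the coins incident to the endpoints of $\alpha$ the local over/under data appears reversed; this is precisely the local change realized by the ribbon move'' is not correct on either count. The ribbon move in this paper is a move on a \emph{single} curl that changes its rotation sense while keeping its crossing sign; it is not a crossing switch and it is not applied simultaneously at two pre-existing crossings. So as written, your proof does not actually locate where or how the ribbon move enters.

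The paper's argument is more explicit and avoids this pitfall. It pushes the arc bounding $g$ outward by a sequence of finger moves (pure $r_2,r_3$), which manufactures two adjacent curls on the same component with opposite signs but the \emph{same} rotation sense; one ribbon move then flips the rotation of one curl so the pair becomes cancellable by Whitney's trick (again pure $r_2,r_3$). If you want to salvage your sphere approach, replace the incorrect ``over/under reversal at the endpoints'' step by this curl-creation and rotation-number bookkeeping: after dragging $\alpha$ around the back of the sphere via finger moves, you are left with exactly such a pair of curls, and the single ribbon move plus Whitney's trick finishes as in the paper.
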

\begin{proof}
 The proof is given in Fig. \ref{fig:changeexternalface}.
\end{proof}

\begin{figure}[H]
\begin{center}
\includegraphics[width=16cm]{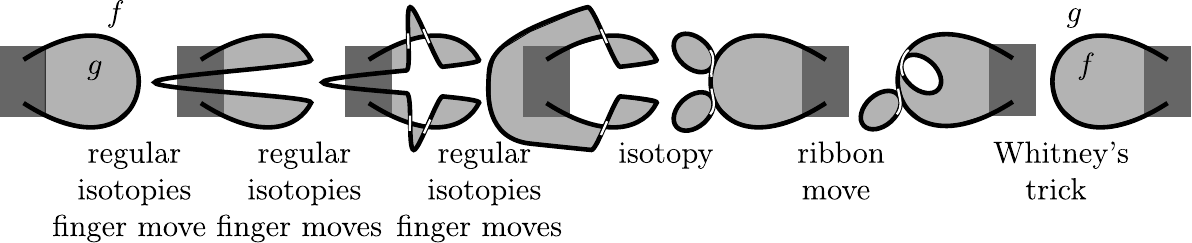}
\caption{\sf Changing the external face $f$ to be any one of its 
adjacent faces $g$ by means of $r_2$, $r_3$ (regular isotopies) 
and the ribbon move: the passage from the first to the 
second configurations is a {\em finger move} where a small segment of an edge is
arbitrarily deformed as a {\em finger} passing over all the crossings of the 
dark gray rectangle; the finger move can be factored by regular isotopies; by other
four finger moves one can go from the second to the third to the fourth 
configuration; the passage from the fourth to the fifth 
configuration is simply an isotopy; from the fifth to the sixth is a ribbon move;
finally, the passage from the sixth to the seventh is acomplished by
Whitney's trick, depicted in the proof below; Whitney's trick also factors as
regular isotopies, as shown in the proof of Corollary \ref{cor:soinsimplification}.
The net effect is to interchange the faces $f$ and $g$: the initial infinite face is $f$ 
and, at the end, the infinite face is $g$.
}
\label{fig:changeexternalface}
\end{center}
\end{figure}

\begin{corollary}
\label{cor:soinsimplification}
 The coin calculus can be simplified to include only the following set of 36 moves
 $ \{\pm \mu_{20}, \pm \mu_{02}, \pm \mu_{20}^\star, \pm \mu_{02}^\star, 
 \pm \mu_{33}^{\pm1}, \pm \mu_{01}, \pm \mu_{01},  \pm \mu_{11}^{\pm -1}, \pm \mu_{31},  \pm \mu_{13},  
 \pm \mu_{54},  \pm \mu_{45}, \pm \mu_{66}^{\pm},\pm \mu_{99}^{\pm}\}.$
\end{corollary}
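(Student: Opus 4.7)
The plan is to exploit the fact that the starred coins $\mu_{ij}^\star$ are precisely the plane-dual configurations of the non-starred $\mu_{ij}$, and that on a blink plane duality is realized geometrically by interchanging the unbounded infinite face $f$ with a bounded face $g$. Since the Lemma just proved shows that any face $g$ adjacent to $f$ can be brought to play the role of the infinite face using only regular isotopies (which are themselves among the coin moves) and a single ribbon move $\pm\mu_{11}^{\pm 1}$, this interchange is realizable inside the reduced list of 36 moves.

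First I would verify systematically that for each starred move slated for removal, namely the variants of $\mu_{33}^\star$, $\mu_{01}^\star$, $\mu_{10}^\star$, $\mu_{11}^\star$, $\mu_{31}^\star$, $\mu_{13}^\star$, $\mu_{54}^\star$, $\mu_{45}^\star$, $\mu_{66}^\star$, $\mu_{99}^\star$ (in all $\pm$ and exponent combinations), there is a face adjacent to $f$ whose interchange with $f$ turns the starred configuration into its non-starred counterpart. This should be a finite case-check: the support of each such coin is an honest disk, so at least one face of the blink touching that disk is adjacent to $f$ in the ambient plane, possibly after an auxiliary isotopy. Once such a $g$ is chosen, one factors $\mu_{ij}^\star$ as a sandwich: change the external face to $g$ via the Lemma, apply $\mu_{ij}$ inside the disk, then change the external face back to $f$. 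Every atomic step of the sandwich already lies in the reduced list.

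The reason $\mu_{20}^\star$ and $\mu_{02}^\star$ must be retained is the exceptional form of their supports, already flagged in Theorem~\ref{theo:theorem}: the right coin of $\mu_{20}$ is a pinched disk rather than a disk, and $\mu_{02}$ carries the dual pinched behavior. In these two cases no bounded face adjacent to $f$ sits inside the configuration in a way that the sandwich construction can use, so the duality cannot be absorbed. An independent check is therefore needed, but it is short: it just confirms that none of the other 34 moves connects the two sides of $\mu_{20}^\star$ or $\mu_{02}^\star$.

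The main obstacle, already announced in the proof of the Lemma, is to establish that Whitney's trick (sixth to seventh diagram of Fig.~\ref{fig:changeexternalface}) factors through regular isotopy alone, so that the Lemma is in fact realized entirely by moves in the reduced list. I would dispose of this by the standard finger-move argument, pushing a thin bigon across the local crossing pattern using only $\pm r_2^{\pm 1}$ and $\pm r_3^{\pm 1}$ and never invoking $r_1$. Combined with the systematic case-check of the second paragraph, this removes every dual move except $\mu_{20}^\star$ and $\mu_{02}^\star$, and a direct count of what survives gives exactly $36$, proving the corollary.
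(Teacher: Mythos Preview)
Your overall strategy---conjugate each starred move by the face-swap operation supplied by the Lemma, then check that the conjugating operation itself lies in the reduced list---is exactly the paper's approach, and your identification of the Whitney-trick factorization as the one remaining technical point is correct. Two things, however, deserve attention.

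First, there is a genuine bootstrap gap. Your sandwich argument presupposes that \emph{all} variants of Reidemeister~2 and~3 are already available inside the 36-move list, but the starred Reidemeister moves $\mu_{01}^\star$, $\mu_{10}^\star$, $\mu_{33}^\star$ are precisely among the moves you are trying to eliminate. The paper handles this explicitly and first: $\mu_{11}$ and $\mu_{33}$ are \emph{self-dual}, so their starred versions are literally the same coins and cost nothing; and $\mu_{01}^\star$, $\mu_{10}^\star$ are derived directly from $\mu_{01}$, $\mu_{10}$ together with the ribbon moves $\pm\mu_{11}^{\pm1}$, \emph{without} invoking the Lemma. Only after this step are ``all Reidemeister~2 and~3 moves at our disposal,'' and only then can the Lemma be applied. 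You should insert this argument before your sandwich.

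Second, your description of the face-swap as a local choice of some $g$ near the coin, to be verified case-by-case, is less sharp than what actually happens. In the flink/blink correspondence, swapping the infinite (white) face with an adjacent (gray) face is a \emph{global} operation on the blink: it is exactly plane-dualization composed with the black/gray edge-color swap. Once you know this, no finite case-check is needed---the double involution carries every $\mu_{ij}^\star$ to $\mu_{ij}$ in one stroke. Your heuristic for why $\mu_{20}^\star$ and $\mu_{02}^\star$ survive (the pinched-disk support) is plausible but is not the argument the paper gives; the paper simply does not claim the involution eliminates them and keeps them in the list.
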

\begin{proof}
We work in the language of flinks.
Moves corresponding to $(\pm \mu_{11}^{\pm1})^\star$ and  $(\pm \mu_{33}^{\pm1})^\star$ are redundant 
because $\pm \mu_{11}^{\pm1}$  $\pm \mu_{33}^{\pm1}$ are self dual. 
Moves $\pm\mu_{10}^\star$ and  $\pm\mu_{01}^\star$ are implied by a combination of moves 
$\pm \mu_{10}$,  $\pm \mu_{01}$,  $\pm \mu_{11}$ and $(\pm \mu_{11})^{-1}$. 
In particular, all the Reidemeister 2 and 3 moves (regular isotopy) 
are at our disposal. We can use this fact to change the external face of
the link diagram to become any chosen adjacent face by using regular isotopy at the cost of creating two 
curls adjacent in the same component with distinct sign and the same rotation number. 
Use the the appropriate ribbon move to
obtain two curls with the distinct signs and distinct rotation number. Now apply Whitney's trick
which cancel these opposite curls by using $r_2^{\pm 1}$ and $r_3^{\pm 1}$ moves
(regular homotopies):
\raisebox{-3mm}{\includegraphics[width=45mm]{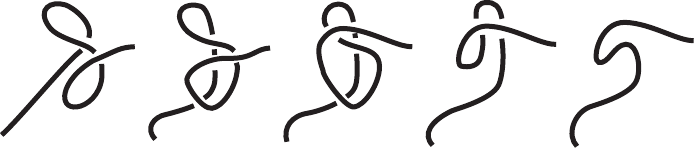}}.
The net effect in the correponding 
final blink  is that it is obtained from the initial blink by dualizing 
and interchanging black and gray
edges. Having this double involutions at our disposal it
is straighforward to obtain all the remaining dual moves.
\end{proof}
In Fig. \ref{fig:reducedblinkcalculus} the 36 moves of the final reduced coin calculus are presented.
\begin{figure}[H]
\begin{center}
\includegraphics[scale=0.93]{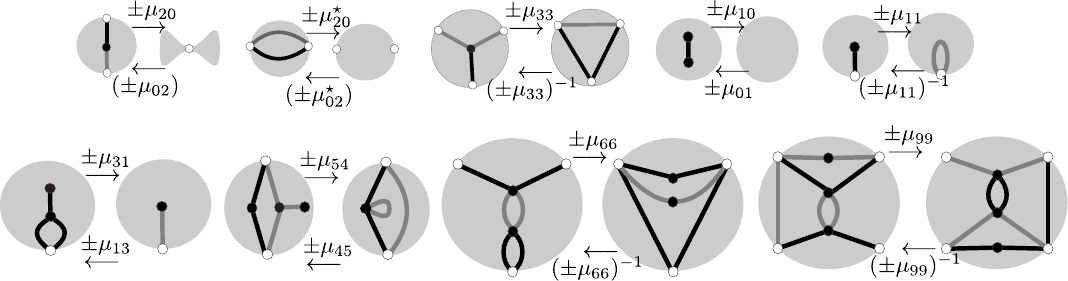}
\caption{\sf The 36 moves (and 36 coins not necessarily distinct) 
defining the reduced coin calculus.}
\label{fig:reducedblinkcalculus}
\end{center}
\end{figure}

\section{Conclusion}
\label{sec:conclusion}
This work is concluded by presenting below a complete census of the 
$k$-small 3-manifolds, for $k=8$. These are the closed, oriented, connected and prime 3-manifolds 
induced by a blink with at most $k$ edges. The first author acknowledges the partial financial support 
of CNPq-Brazil,process number 302353/2014-3. The second author acknowledges the financial support of
FACEPE, IBPG-1295-1.03/12.

\begin{figure}[H]
\begin{center}
\includegraphics[scale=1.15]{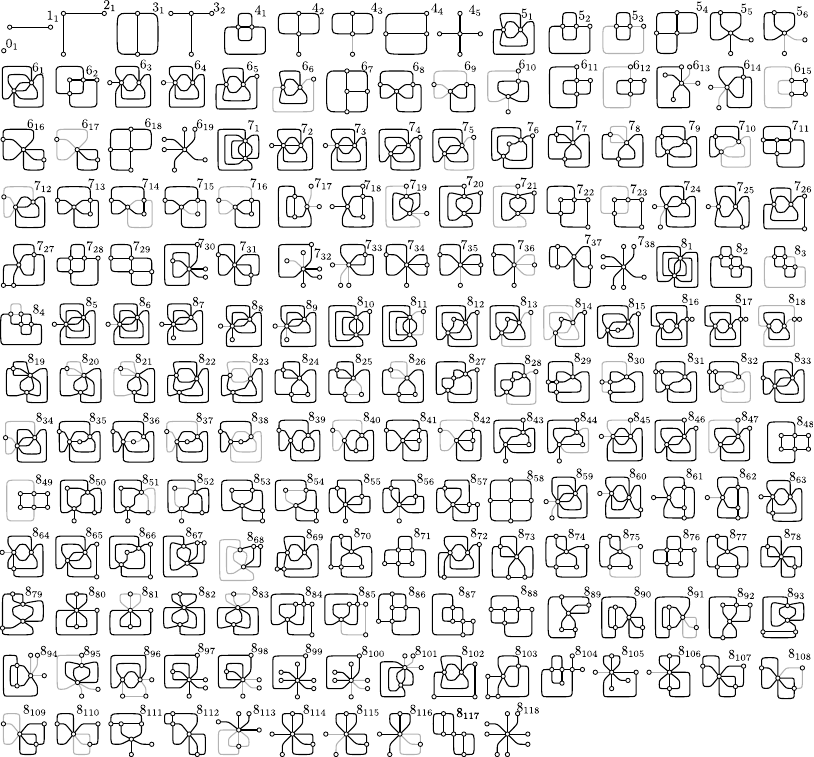}
\caption{\sf A complete census (no misses, no duplicates) of 8-small prime 3-manifolds. 
They correspond to the first (by lexicography) 191 closed, oriented, connected and prime 3-manifolds. 
These are such 3-manifolds which are induced by blinks up to $k=8$ edges. 
The {\em sequence of WRT-invariants} of a closed oriented 
connected 3-manifold is an infinite sequence of complex numbers, indexed by $r\ge3$. 
The $r$-th WRT-invariant of a manifold is directly obtained 
from a blink inducing it. An entirely 
combinatorial recipe directly implementable to compute the WRT-invariants of a 3-manifold from 
a blink inducing it is given in Chapter 7 of \cite{lins1995gca}.
This recipe, in its turn is justified by at the very basic level, also by the combinatorial
theory developed in \cite{kauffman1994tlr}.
Recall that a blink is a finite plane graph with an (arbitrary) edge bipartition. Such 
census are possible by completely combinatorial methods: we generate a subset of 
blinks that misses no 3-manifold by lexicography and the theory in \cite{lins2007blink};
then we compute the homology and the WRT-invariants; at this level $k=8$ 
these two invariants are seen to be complete.  There are 3 independent implementations 
of to obtain the Kauffman-Lins version (depending upon the Temperley-Lieb algebra)
of the WRT-invariants. They were implemented by different people,
in non-overlapping times:
S. Lins (1990-1995), S. Melo (1999-2001) and L. Lins (2006-2007). The results agree.
The software BLINK (\cite{lins2007blink})  
computes the WRT-invariants and the above census by exhaustive
generation of blinks. BLINK also draws links and 0-flinks in a grid,
as a full strength application of network flow theory, using Tamassia's algorithm
\cite {tamassia1987egg}.
BLINK is currently the object of an open source Github project, 
and is available upon request. 
}
\label{fig:prime191Blinks-white-vertices}
\end{center}
\end{figure}

-----------------------------------
\bibliographystyle{spmpsci}
\bibliography{bibliography}

\end{document}